\newtheorem{prop}{Proposition}[section]
\newtheorem{thm}[prop]{Theorem}
\newtheorem{lem}[prop]{Lemma}
\theoremstyle{remark}
  \newtheorem{rk}[prop]{Remark}
\theoremstyle{definition}
\numberwithin{equation}{section}
\begin{document}
\title[A topological characterization of omega-limit sets on dynamical systems] {A topological characterization of omega-limit sets on dynamical systems}
\author{Hahng-Yun Chu, Ahyoung Kim$^{\ast}$ and Jong-Suh Park}

\address{Hahng-Yun Chu, Department of Mathematics, Chungnam National University, 79, Daehangno, Yuseong-gu,
Daejeon 305-764, Republic of Korea}
\email{\rm hychu@cnu.ac.kr\ (H.-Y.Chu)}
\address{Ahoung Kim, Department of Mathematics, Chungnam National University, 79, Daehangno, Yuseong-gu,
Daejeon 305-764, Republic of Korea}
\email{\rm aykim111@hotmail.com\ (A.Kim)}
\address{Jong-Suh Park, Department of Mathematics, Chungnam National University, 79, Daehangno, Yuseong-gu,
Daejeon 305-764, Republic of Korea}
\email{\rm jspark@cnu.ac.kr,  jspark3141@gmail.com\ (J.-S.Park)}

\thanks{\it $\ast$ Corresponding author.}

\subjclass[2010]{37B25.}
\keywords{pseudo-orbit, limit sets, $\Omega$-limit sets, attracting, quasi-attracting.}

\begin{abstract}

In this article, we deal with several notions in dynamical systems. Firstly, we prove that both closure function and orbital function are idempotent on set-valued dynamical systems. And we show that the compact limit set of a connected set is also connected. Furthermore, we prove that the $\Omega$-limit set of a compact set is quasi-attracting.

\end{abstract}

\maketitle

\thispagestyle{empty} \markboth{Hahng-Yun Chu, Min-Young Kim and Jong-Suh Park} {A topological characterization of omega-limit sets on dynamical systems}


\section{Introduction}\label{sec: intro}

\smallskip

The theory for the notions of attractors and omega-limit sets is so important for the classical theory of dynamical systems. Conley\cite{Co78} introduced a topological definition of attractors for a flow on a compact metric space. Hurley\cite{Hu91, Hu92} obtained results which is related to the correspondence between attractors and Lyapunov functions on noncompact spaces.
Akin\cite{Ak93} and McGehee\cite{Mc92} obtained many properties of attractors in the set-valued dynamics.
Set-valued dynamical systems appear to be rather suitable for describing the global behavior of processes in optimal control dynamics and economic dynamics. Also the systems are used to describe multi-valued differential equations.
Chu et al.\cite{Ch05, CP05, Ch08} dealt with the notions of attractors, recurrences and stabilities in set-valued dynamical systems. The properties of set-valued dynamical systems have been investigated in several papers.\cite{Ch05,Ch08,CP05,Mc92,PKC07}

The concept of omega-limit set, arising from their ubiquitous applications in dynamical systems, is also an extremely used tool in the abstract theory of dynamical systems. Especially, the notion of omega-limit sets is much related to the notion of attractors. These notions are used to describe eventually the positive time behavior for dynamical systems.
A pseudo-orbit(chain) was firstly used by Bowen \cite{Bo75} and Conley \cite{Co78}. The notion is a very strong tool to understand important theories in the several fields of Mathematics and generates many results about the induced concepts, for example, chain transitive, chain recurrence, shadowing property and so on.
See \cite{Ao94, BCP86, BS70, Bl92, KP10, Op05,Pi99, Si75, Vr93}.

In this paper, we firstly focus on the properties of the special two functions in set-valued dynamical systems. And we pay specially close attention to limit sets, and also explain a kind of invariance for the limit sets and discuss the properties of attracting sets on locally compact metric spaces. Moreover, we consider the relationship between the notion of an $\Omega$-limit set and the notion of an quasi-attracting set. Here, the quasi-attracting set precisely means the intersection of attracting sets and then becomes the general version of the notion of attracting set.
More precisely, we show that the $\Omega$-limit set of compact set is quasi-attracting.

The paper is organized as follows.

In section $2$, we explain the elementary definitions for the proof of the main theorems. We focus on a closure function and an orbital function on the power set of the given space and show that the above two functions have idempotent property.

In section $3$, we study the invariance for a limit and show that the connectedness in the power set is invariant under the limit.
Next, we briefly sketch for the theories of attracting sets and quasi-attracting sets.
We also prove that the $\Omega$-limit set of a compact subset of $X$ becomes an quasi-attracting set.

\section{Set-valued dynamical systems}\label{sec: rec}

\smallskip

Let $(X,d)$ be a locally compact metric space. A {\textit{flow}} on $X$ is a continuous map $\pi : X \times \mathbb{R} \to X$ that satisfies the following group laws; for every $x \in X$, $\pi(x,0)=x$ and for every $t,s \in \mathbb{R}$, $x \in X$, $\pi(\pi(x,t),s)=\pi(x,t+s)$.
For a convenience, we briefly write $x \cdot t=\pi(x,t)$.
For any $x \in X$, we define an \textit{orbit} of $x$ to be the subset $\{x \cdot t\mid t \in \mathbb{R}\}$ of $X$ which is denoted by $O(x)$.
We say that a subset $Y$ of $X$ is \textit{positively invariant {\rm{(}}invariant{\rm{)}}} under $\pi$ if $Y \cdot \mathbb{R}^{+}=Y (Y \cdot \mathbb{R}=Y)$.

Next, we introduce the subsets of $X$ which are related to the eventual orbit of a point under the flow $\pi$. For $x \in X$, the {\textit{limit set}} of $x$, denoted by $\Lambda^{+}(x)$, is defined by $$\Lambda^{+}(x):=\bigcap_{~ t\geq 0} \overline {x\cdot [t,\infty)}.$$
The limit set of $x$ has a major role in Conley's theory, and for its basic properties we refer to \cite{Co78,Di08}.
For  $x \in X$, we also call the {\textit{first prolongational limit set}} and {\textit{first prolongational set}} of $x$ as defined, respectively, by
\begin{eqnarray*}
&&J^{+}(x):=\bigcap_{U\in N(x), t\geq 0}\overline{U \cdot [t,\infty)},\\
&&D^{+}(x):=\bigcap_{U\in N(x)}\overline{U\cdot \mathbb{R}^{+}},
\end{eqnarray*}
where $N(x)$ is the set of all neighborhoods of $x$.

\smallskip
In \cite{BCP86}, Bae, Choi and Park studied limit sets and prolongational sets in topological dynamics. Next remark is immediately proved from the definitions.

\smallskip

\begin{rk} For $x \in X$, the following equivalences hold.

\begin{itemize}
\item[(1)] $y \in \Lambda^{+}(x)$ if and only if there is a sequence $\{t_{n}\}$ in $\mathbb{R}^{+}$ with $t_{n}\rightarrow \infty$ such that $x\cdot t_{n}\rightarrow y$.
\item[(2)] $y \in J^{+}(x)$ if and only if there are a sequence $\{x_{n}\}$ in $X$ and a sequence $\{t_n\}$ in $\mathbb{R}^{+}$ such that $x_{n}\rightarrow x$, $t_{n}\rightarrow \infty$ and $x_{n} \cdot t_{n}\rightarrow y$.
\item[(3)]$y \in D^{+}(x)$ if and only if there are a sequence $\{x_n\}$ in $X$ and a sequence $\{t_{n}\}$ in $\mathbb{R}^{+}$ such that $x_{n}\rightarrow x$ and $x_{n}t_{n}\rightarrow y$.
\end{itemize}
\end{rk}

\smallskip

Let $\Gamma : X \rightarrow 2^{X}$ be a function and $A \subseteq X,$ then we may canonically define the new mapping from the power set of $X$ to the same set as follows,
$$
\Gamma(A)=\cup_{x \in A} \Gamma(x).
$$
We define the composition $\Gamma^2 = \Gamma \circ \Gamma$ given by $\Gamma^2(x) = \Gamma(\Gamma(x))= \cup_{y \in \Gamma(x)} \Gamma(y)$, so we can define naturally the iteration $\Gamma^n : X \rightarrow 2^{X}$ inductively by $\Gamma^{1}(x)=\Gamma(x)$ and $\Gamma^{n}(x)=\Gamma(\Gamma^{n-1}(x))$.
So the trajectory for the function $\Gamma$ can be expressed by the union of the iteration $\Gamma^{n}$.
For a family of functions $\Gamma_{i} : X \rightarrow 2^{X}(i \in I)$, we give the new map $\cup_{i \in I} \Gamma_{i} : X \rightarrow 2^{X}$ defined by $(\cup_{i \in I} \Gamma_{i})(x)=\cup_{i \in I} \Gamma_{i}(x)$.

Let $\mathcal{P}$ be the set of all functions from $X$ to its power set $2^{X}$ and let $\Gamma \in \mathcal{P}$. We define the mappings $D$ and $S$ from the set $\mathcal{P}$ to itself given by, for every $x \in X$,
$$(D\Gamma)(x)=\cap_{U \in N(x)}\overline{\Gamma(U)} \ \ {\rm and} \ \ (S\Gamma)(x)=\cup_{n=1}^{\infty} \Gamma^{n}(x).$$
We call $D$ a \textit {closure funtion} for $\Gamma$ and $S$ a \textit {orbital function} for $\Gamma$ defined on $\mathcal{P}$.

\smallskip

\begin{rk}\label{2.2}\cite{CKP13}
  Let $\Gamma :X \to 2^{X}$ be a mapping and $x \in X.$ Then $D\Gamma(x)$ is the set of all points $y \in X$ with the property that there exist sequences $(x_{n})$ and $(y_{n})$ in $X$ with $y_{n} \in \Gamma(x_{n})$ such that $x_{n}\rightarrow x$, $y_{n}\rightarrow y.$ Furthermore, $S\Gamma(x)$ is the set of all points $y \in X$ such that there is a finite subset $\{x_{1}, \cdots , x_{k}\}$ of $X$ with the properties that $x_1=x, x_k=y$ and $x_{i+1} \in \Gamma(x_{i})$, $i=1,\cdots,k-1$.
\end{rk}

\smallskip

The new mappings have interesting properties, especially the iterations of the mappings are just the original mappings.

\smallskip

\begin{thm}
A closure function $D$ is idempotent and so is an orbital function $S$, that is, $D^{2}=D$ and $S^{2}=S$.
\end{thm}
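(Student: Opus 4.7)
The plan is to establish both identities by first showing that each operator is extensive (that is, $\Gamma(x) \subseteq D\Gamma(x)$ and $\Gamma(x) \subseteq S\Gamma(x)$ for every $x$) and monotone in $\Gamma$, which immediately yields the inclusions $D\Gamma \subseteq D^{2}\Gamma$ and $S\Gamma \subseteq S^{2}\Gamma$. Extensivity for $D$ follows because $x \in U$ for every $U \in N(x)$, so $\Gamma(x) \subseteq \Gamma(U) \subseteq \overline{\Gamma(U)}$ and hence $\Gamma(x) \subseteq \bigcap_{U \in N(x)} \overline{\Gamma(U)} = D\Gamma(x)$; extensivity for $S$ is immediate from $S\Gamma(x) \supseteq \Gamma^{1}(x) = \Gamma(x)$. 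Monotonicity (if $\Gamma_{1}(x) \subseteq \Gamma_{2}(x)$ for every $x$, then $D\Gamma_{1} \subseteq D\Gamma_{2}$ and $S\Gamma_{1} \subseteq S\Gamma_{2}$) is routine from the definitions.

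For the reverse inclusion $S^{2}\Gamma \subseteq S\Gamma$, I would exploit the finite-chain characterization in Remark~\ref{2.2}. Any $z \in (S\Gamma)^{2}(x) = S\Gamma(S\Gamma(x))$ is reached by a $\Gamma$-chain $x = x_{1}, \ldots , x_{k} = y$ followed by a $\Gamma$-chain $y = y_{1}, \ldots , y_{\ell} = z$; concatenating produces a single $\Gamma$-chain from $x$ to $z$, so $z \in S\Gamma(x)$. By induction $(S\Gamma)^{k}(x) \subseteq S\Gamma(x)$ for every $k \geq 1$, whence $S^{2}\Gamma(x) = \bigcup_{k \geq 1}(S\Gamma)^{k}(x) \subseteq S\Gamma(x)$.

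The less automatic half is $D^{2}\Gamma \subseteq D\Gamma$, which is the main technical step. Given $y \in D(D\Gamma)(x)$, Remark~\ref{2.2} applied to the mapping $D\Gamma$ supplies sequences $x_{n} \to x$ and $y_{n} \to y$ with $y_{n} \in D\Gamma(x_{n})$. For each fixed $n$, the same remark applied to $\Gamma$ then supplies sequences $x_{n,k} \to x_{n}$ and $y_{n,k} \to y_{n}$ with $y_{n,k} \in \Gamma(x_{n,k})$. The decisive move is a diagonal selection in the metric space $X$: choose $k_{n}$ so large that $d(x_{n,k_{n}}, x_{n}) < 1/n$ and $d(y_{n,k_{n}}, y_{n}) < 1/n$. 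Then $x_{n,k_{n}} \to x$, $y_{n,k_{n}} \to y$, and $y_{n,k_{n}} \in \Gamma(x_{n,k_{n}})$, so one more application of Remark~\ref{2.2} gives $y \in D\Gamma(x)$. I expect this diagonal extraction to be the only real obstacle; the metric structure of $X$ is exactly what makes it go through, and once it is in place both identities follow.
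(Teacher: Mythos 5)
Your proposal is correct, but the two halves relate to the paper's proof differently. For $S^{2}=S$ your concatenation of finite $\Gamma$-chains is just the combinatorial restatement of what the paper does algebraically: the paper expands $(S\Gamma)^{2}(x)$ as $\bigcup_{n,k}\Gamma^{k}(\Gamma^{n}(x))=\bigcup_{n,k}\Gamma^{n+k}(x)\subseteq S\Gamma(x)$ and then runs the same induction on $(S\Gamma)^{m}$; these are the same argument in different notation. For the harder inclusion $D^{2}\Gamma\subseteq D\Gamma$, however, you take a genuinely different route. The paper argues purely set-theoretically: for an open $U\in N(x)$ and any $y\in U$ one has $U\in N(y)$, hence $\bigcap_{V\in N(y)}\overline{\Gamma(V)}\subseteq\overline{\Gamma(U)}$, so $\overline{\bigcup_{y\in U}\bigcap_{V\in N(y)}\overline{\Gamma(V)}}\subseteq\overline{\Gamma(U)}$ and intersecting over $U$ gives the claim. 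You instead invoke the sequential characterization of Remark~2.2 twice and extract a diagonal subsequence $x_{n,k_{n}}\to x$, $y_{n,k_{n}}\to y$ with $y_{n,k_{n}}\in\Gamma(x_{n,k_{n}})$. Your argument is sound because $X$ is metric (first countability is what makes Remark~2.2 and the diagonal extraction legitimate), but it buys less generality: the paper's inclusion works verbatim in any topological space, whereas yours does not. Conversely, your version of the easy directions via extensivity (note that extensivity of $D$ applied directly to the map $D\Gamma$ already gives $D\Gamma\subseteq D^{2}\Gamma$ without invoking monotonicity) is a cleaner packaging of what the paper proves by hand. No gaps; both identities are fully established.
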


\begin{proof}
Firstly, we show that $D^{2}\Gamma(x)$ is contained in $D\Gamma(x)$. From the definition of the mapping, we directly consider the equalities
\begin{eqnarray*}
D^{2}\Gamma(x)
&=& (D(D\Gamma))(x)\\
&=& \cap_{U\in N(x)} \overline{D\Gamma (U)}\\
&=& \cap_{U\in N(x)} \overline{\cup_{y \in U}D\Gamma (y)}\\
&=&\cap_{U\in N(x)} \overline{\cup_{y \in U} \cap_{V\in N(y)} \overline{\Gamma (V)}}.
\end{eqnarray*}
Here, for each $y \in U$, we also obtain the following inclusion
$$
\cap_{V\in N(y)} \overline{\Gamma (V)} \subseteq \overline{\Gamma(U)}.
$$
Thus we have that
$$
\cup_{y \in U} \cap_{V\in N(y)} \overline{\Gamma (V)} \subseteq \overline{\Gamma(U)},
$$
and so
\begin{eqnarray*}
\cap_{U\in N(x)} \overline{\cup_{y \in U} \cap_{V\in N(y)} \overline{\Gamma (V)}}
&\subseteq& \cap_{U\in N(x)} \overline{\Gamma (U)} = D\Gamma(x).\\
\end{eqnarray*}

Conversely, we look at the opposite direction of the proof to get the equality.
For every neighborhood $V$ of $y$, we get $\Gamma(y) \subseteq \cap_{V \in N(y)}\overline{\Gamma(V)}$. By the definitions, we obtain that
$$
\overline{\Gamma(U)} = \overline{\cup_{y \in U}\Gamma(y)} \subseteq \overline{\cup_{y \in U} \cap _{V \in N(y)} \overline{\Gamma(V)}},
$$
so we conclude that
$$
D\Gamma(x)=\cap_{U \in N(x)}\overline{\Gamma(U)} \subseteq \cap_{U \in N(x)}\overline{\cup_{y \in U} \cap _{V \in N(y)} \overline{\Gamma(V)}} = D^2 \Gamma(x).
$$
Then we have the first equality of this proof.

Next, we consider the case for the mapping $S$.
By the definitions of the mappings, we get that
\begin{eqnarray*}
S^{2}\Gamma(x)
&=& (S(S\Gamma))(x) \\
&=& \cup_{n=1}^{\infty}(S\Gamma)^n(x) \\
&=& (S\Gamma)(x) \cup (S\Gamma)^2(x) \cup \cdots ,
\end{eqnarray*}
for every $x \in X$.
It is clear that $(S^2\Gamma)(x) \supseteq (S\Gamma)(x)$.
Now we prove another inclusion $S(S\Gamma)(x) \subseteq (S\Gamma)(x)$.
Firstly, we obtain that
\begin{eqnarray*}
(S\Gamma)^{2}(x)
&=& (S\Gamma)((S\Gamma)(x)) \\
&=& \cup_{k=1}^{\infty} \Gamma^k(S\Gamma(x)) \\
&=& \cup_{y \in S\Gamma(x)} \cup_{k=1}^{\infty} \Gamma^k(y) \\
&=& \cup_{y \in \cup_{n=1}^{\infty}\Gamma^n(x)} \cup_{k=1}^{\infty} \Gamma^k(y) \\
&=& \cup_{n=1}^{\infty} \cup_{y \in \Gamma^n(x)} \cup_{k=1}^{\infty} \Gamma^k(y) \\
&=& \cup_{n=1}^{\infty} \cup_{k=1}^{\infty} \cup_{y \in \Gamma^n(x)} \Gamma^k(y) \\
&=& \cup_{n=1}^{\infty} \cup_{k=1}^{\infty} \Gamma^k(\Gamma^n(x)) \\
&\subseteq& \cup_{n=1}^{\infty}\Gamma^n(x) \\
&=& S\Gamma(x).
\end{eqnarray*}
Thus we have $(S\Gamma)^2(x) \subseteq S\Gamma(x)$.
Using the induction, firstly we assume that
$(S\Gamma)^l(x) \subseteq (S\Gamma)(x)$ for every positive integer $l \leq m$.
Then, by the properties of the mappings, we have that
\begin{eqnarray*}
(S\Gamma)^{m+1}(x)
&=& (S\Gamma)((S\Gamma)^m(x))  \\
&\subseteq&  (S\Gamma)((S\Gamma)(x)) \\
&\subseteq& S\Gamma(x).
\end{eqnarray*}
So $(S\Gamma)^n(x) \subseteq (S\Gamma)(x)$, for every positive integer $n$.
Then, $\cup_{n=1}^{\infty}(S\Gamma)^n(x) \subseteq (S\Gamma)(x)$ and so $S(S\Gamma)(x) \subseteq S\Gamma(x)$ for all $x$.
Hence we have $S^2\Gamma = S\Gamma$, as desired.
\end{proof}

\smallskip

We recall the notions of chains and $\Omega$-limit sets in \cite{Co78} for details. Let $x,y$ be elements of $X$ and $\epsilon$, $t$ positive real numbers. An {\it $(\epsilon,t)$-chain} from $x$ to $y$ means a pair of finite sequences $x=x_{1},x_{2},\cdots,x_{n},x_{n+1}=y$ in $X$ and  $t_{1},t_{2},\cdots,t_{n}$ in
$\mathbb{R}^{+}$ such that $t_{i}\geq t$ and $d(x_{i} \cdot t_{i},x_{i+1})\leq \epsilon$ for all $i=1,2,\cdots,n$. Define a relation $R$ in $X \times X$ given by $xRy$ means, for every $\varepsilon>0$ and $t>0$, there exists $(\epsilon,t)$-chain from $x$ to $y$. We also denote $(x,y) \in R$ by $xRy$.

For $x \in X$, we define the {\it $\Omega$-limit set} of $x$ by $\Omega(x)= \{ y \in X : (x,y) \in R \}$. We also canonically define a map $\Omega : X \to 2^{X}$ given by $x \mapsto \Omega(x).$

In \cite{Co78}, Conley investigated the several notions of topological dynamics in a compact metric space. He proved that the chain relation $R$ is closed and transitive on $X$ and also showed that if $(x,y) \in R$ and $(s_1, s_2) \in \mathbb{R}^+ \times \mathbb{R}^+$, then $(x\cdot s_{1},y \cdot s_{2}) \in R$. We observe that $\Omega(x)$ is a closed invariant subset of a compact metric space $X$ and $J^{+}(x)\subseteq \Omega(x)$(see \cite[p.36]{Co78} and \cite[p.2721]{Di08}).

\medskip


\medskip

\section{Attracting sets and quasi-attracting sets}\label{sec: chain2}

\medskip

In this section, we investigate the properties of the limit sets, attracting sets and quasi-attracting sets in a locally compact space $X$.

For a subset $Y$ of $X$, we define the {\it limit set} of $Y$ by
$$
\omega(Y)=\bigcap_{~ t\geq 0} \overline {Y\cdot [t,\infty)}.
$$
Note that $\omega(Y)$ is a maximal invariant subset in $Y\cdot [0,\infty)$ and is generally larger than $\Lambda^+(Y)=\bigcup_{x\in Y}\Lambda^+(x)$.

A positively invariant closed subset $A$ of $X$ is called an {\it attracting set} if $A$ admits a neighborhood $U$ such that $\omega(U) \subseteq A$. A closed set $A$ which is the intersection of attracting sets will be called a {\it quasi-attracting set}.

It is easy to see that if $A$ is a (quasi-)attracting set, so is $A\cdot t$, for $t \in \mathbb{R}$.
Let $\{B_i\}_{i \in I}$ (here, $I$ is some index set) be the family of quasi-attracting sets, then $\bigcap_{i\in I}B_i$ is also a quasi-attracting set.
We note that a quasi-attracting set is just positively invariant and in general (quasi-)atracting set need not invariant. If an (quasi-)attracting set is invariant, the set is called an (quasi-)attractor in the sense of Conley(see \cite{Co78}); that is, an invariant attracting set $A$ is an attractor. Similarly, we define a quasi-attractor by the intersection of attractors.

Next, we introduce the second main theorem which is related to an invariance. More precisely speaking, the connectedness is invariant under the notion of limit.

\smallskip

\begin{thm} Let  $Y$ be a connected subset of $X$. If the limit set $\omega(Y)$ of $Y$ is compact, then the limit set is connected.
\end{thm}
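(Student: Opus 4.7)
The natural approach is to realize $\omega(Y)$ as a nested intersection of connected closed sets and invoke the standard fact that a decreasing intersection of nonempty compact connected sets in a Hausdorff space is connected. Write $F_t = \overline{Y\cdot [t,\infty)}$, so that $\omega(Y)=\bigcap_{t\ge 0}F_t$ and $F_s\subseteq F_t$ whenever $s\ge t$. Each $F_t$ is connected: $Y\times[t,\infty)$ is a product of connected sets and hence connected, the flow $\pi$ is continuous, so $Y\cdot[t,\infty)=\pi(Y\times[t,\infty))$ is connected, and the closure of a connected set is connected.

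The obstacle is that the $F_t$ are not assumed compact, so the nested-intersection theorem does not apply directly; the whole point of the compactness hypothesis on $\omega(Y)$ must be to repair this. My plan is therefore to use that hypothesis together with local compactness of $X$ to produce a time $t_0$ after which $F_t$ is actually compact. If $\omega(Y)$ is empty there is nothing to prove, so assume $\omega(Y)\ne\emptyset$ and choose, by local compactness, a compact set $K$ with $\omega(Y)\subseteq K^{\circ}$.

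The key intermediate claim is that $Y\cdot[t_0,\infty)\cap\partial K=\emptyset$ for some $t_0\ge 0$. Otherwise one could select $y_n\in Y$ and $s_n\ge n$ with $y_n\cdot s_n\in\partial K$; compactness of $\partial K$ would furnish a subsequential limit $z\in\partial K$, and since $s_n\to\infty$ this $z$ would belong to $\omega(Y)\cap\partial K$, contradicting $\omega(Y)\subseteq K^{\circ}$. Once $Y\cdot[t_0,\infty)$ is disjoint from $\partial K$, it is covered by the two disjoint open sets $K^{\circ}$ and $X\setminus K$; it meets $K^{\circ}$ because any point of the nonempty set $\omega(Y)$ is a limit of values $y_n\cdot s_n$ with $s_n\to\infty$, and those tail values must eventually enter the open set $K^{\circ}$. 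Connectedness of $Y\cdot[t_0,\infty)$ now forces $Y\cdot[t_0,\infty)\subseteq K^{\circ}$, and hence $F_{t_0}\subseteq K$.

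With $F_{t_0}$ compact, every $F_t$ for $t\ge t_0$ is a closed subset of $K$, hence compact and connected, and $\{F_t\}_{t\ge t_0}$ is decreasing with $\bigcap_{t\ge t_0}F_t=\omega(Y)$. The classical result that a nested intersection of nonempty compact connected subsets of a Hausdorff space is connected then finishes the argument. The main obstacle, as indicated, is the step from ``$F_t$ connected'' to ``$F_t$ eventually compact''; the device that makes it work is the topological-trapping observation that a connected set meeting $K^{\circ}$ but missing $\partial K$ must lie entirely inside $K^{\circ}$.
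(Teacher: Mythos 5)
Your proof is correct, but it is organized differently from the paper's. The paper argues by contradiction from a separation $\omega(Y)=A\sqcup B$ with disjoint neighborhoods $U,V$, and proves a standalone lemma (Lemma 3.2): a decreasing sequence of closed \emph{connected} sets with nonempty compact intersection must eventually lie inside any prescribed neighborhood of that intersection. Applying this to the neighborhood $U\cup V$ and using connectedness of $A_n=\overline{Y\cdot[n,\infty)}$ gives the contradiction at once; at no point does the paper show that the sets $A_n$ are eventually compact. You instead use local compactness to trap $Y\cdot[t_0,\infty)$ inside a compact neighborhood $K$ of $\omega(Y)$ --- so the tails $F_t$, $t\ge t_0$, are genuine continua --- and then invoke the classical theorem that a nested intersection of nonempty compact connected subsets of a Hausdorff space is connected. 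The underlying device is the same in both arguments (a connected set meeting the interior of a compact neighborhood but missing its boundary is trapped inside, since escape would produce boundary points accumulating in $\omega(Y)$), but your decomposition is more modular: it isolates the dynamical content in the single claim ``$F_t$ is eventually compact'' and delegates the rest to a standard fact, and it also makes explicit the connectedness of $\overline{Y\cdot[t,\infty)}$, which the paper uses without justification. What the paper's route buys is a slightly more flexible lemma --- eventual containment in \emph{any} neighborhood of the intersection, without ever needing the terms themselves to be compact --- but for this theorem both routes are complete and deliver the same conclusion.
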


\begin{proof} Suppose the contrary of the conclusion. Then we can choose the separations $A$ and $B$ of $\omega(Y)$. Note that the subsets $A$ and $B$ of $\omega(Y)$ are mutually disjoint clopen subsets in $\omega(Y)$. From the compactness of $\omega(Y)$, the disjoint sets are also compact in $X$. Then there exist disjoint neighborhoods $U$ and $V$ of $A$ and $B$, respectively.
Put $A_k:=\overline{Y \cdot [k,\infty)}$.
\begin{lem}\label{number} Let $\{A_n\}$ be a sequence of closed, connected subsets of $X$
with satisfying the properties that
\begin{itemize}
\item[(1)]$A_1\supseteq A_2\supseteq \cdots ,$
\item[(2)]$\bigcap_{n=1}^{\infty}A_n$ is a nonempty compact subset of $X$,
\end{itemize}
then for an arbitrary neighborhood $U$ of $\bigcap_{n=1}^{\infty}A_n$, there is a natural number $n$ such that $A_n \subseteq U$.
\end{lem}

\begin{proof} First, let $A$ be the subset $\bigcap_{n=1}^{\infty}A_n$ of $X$. Suppose the contrary of the conclusion, that is, there exists a neighborhood $U$ of $\bigcap_{n=1}^{\infty}A_n$ such that $A_n\nsubseteq U$ for each natural number $n$. Owing to the fact that $A$ is the compact subset of a locally compact space $X$, there exists a neighborhood $V$ of $A$ such that $\overline{V} \subseteq U$ and $\overline{V}$ is compact. From the assumption, for each $n$, $A_n$ is not contained in $U$, so $A_n \cap V^c$ is nonempty.
Also, since $A_n \cap V$ contains the set $A$, it is nonempty. From the connectedness of $A_n$, $A_n$ intersects to the boundary of $V$. So, we can choose an element $x_n$ of $A_n \cap \partial V$. Since the boundary $\partial V$ is also compact, there exists a convergent subsequence of $\{x_n\}$ in $\partial V$. Without loss of generality, we can assume that the sequence $\{x_n\}$ converges to a point $x$ in $\partial V$.
Let $k$ be an arbitrary natural number. For a natural number $n$ which is bigger than $k$, $x_n$ is contained in $A_k$. So, the limit point $x$ is also an element of $\overline{A_k}$. Since $A_k$ is closed, $x \in \bigcap_{k=1}^{\infty}A_k=A$. Hence, we derive a contradiction from the fact that $V$ is the neighborhood of $A$. Therefore, we can find a natural number $n$ such that $A_n \subseteq U$, which completes the proof.
\end{proof}
By Lemma \ref{number}, there exists a natural number $n$ such that $A_n \subseteq U \cup V.$ Since $A_n$ is connected, either $A_{n}\subseteq U$ or $A_n \subseteq V$. Without loss of generality, we can suppose that $A_n \subseteq U$. If $k \geq n$, we obtain that $A_k \subseteq A_n \subseteq U$ and so $\cap_{k=n}^{\infty} A_k= \omega(Y) \subseteq U$. This is a contradiction.
Thus, $\omega(Y)$ is connected.
\end{proof}

\smallskip

Now, for arbitrary positive real numbers $\varepsilon$ and $t$, we define the set $P_t(M,\varepsilon)$ by
$$P_t(M,\varepsilon):=\{y \in X \mid {\rm{there \ is\ an}} \ (\varepsilon, t){\rm{-chain}} \ {\rm{from}} \ x \ {\rm{to}}\  y \ {\rm{for}\  some} \  x \in M \}.$$

In the following lemma, we show that the $\Omega$-limit set $\Omega(M)$ of compact set $M$ is represented by the intersection of the above subsets. Actually, this representation play an important role in the proof of Theorem \ref{cpt}.

\smallskip

\begin{lem} \label{ome} Let $M$ be a compact subset of $X$. Then $$\Omega(M)= \bigcap_{ \ \varepsilon,t>0}P_t(M,\varepsilon).$$
\end{lem}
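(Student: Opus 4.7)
The plan is to prove the two inclusions separately, with the reverse direction carrying all the content. The inclusion $\Omega(M) \subseteq \bigcap_{\varepsilon,t>0} P_t(M,\varepsilon)$ is immediate from the definitions: if $y \in \Omega(x)$ for some $x\in M$, then by the definition of the chain relation $R$, an $(\varepsilon,t)$-chain from $x$ to $y$ exists for every positive $\varepsilon$ and $t$, so $y \in P_t(M,\varepsilon)$ for all such $\varepsilon,t$.

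For the reverse inclusion, pick $y$ in the intersection and set $\varepsilon_n = 1/n$, $t_n = n$. For each $n$, select $x_n \in M$ admitting an $(\varepsilon_n,t_n)$-chain from $x_n$ to $y$. Since $M$ is compact, I may pass to a subsequence with $x_n \to x^* \in M$, and it then suffices to show $y\in \Omega(x^*)$, i.e.\ that for every $\varepsilon,t>0$ some $(\varepsilon,t)$-chain joins $x^*$ to $y$.

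The main obstacle is that the initial duration $s_1^{(n)}$ of the chain from $x_n$ can be arbitrarily large, while the flow need not be equicontinuous in time on a non-compact space; thus one cannot hope that $d(x^*\cdot s_1^{(n)}, x_n\cdot s_1^{(n)})$ stays small even when $x_n$ is close to $x^*$. The remedy is to refine the chain by splitting off a short initial segment of duration exactly $t$ before substituting. Fix $\varepsilon,t>0$; by continuity of the flow choose $\delta>0$ so that $d(z,x^*)<\delta$ forces $d(z\cdot t,\, x^*\cdot t) < \varepsilon/2$. Then pick $n$ large enough that $\varepsilon_n < \varepsilon/2$, $t_n\geq 2t$, and $d(x_n,x^*)<\delta$, and write the given chain as $x_n=z_1,z_2,\ldots,z_{k+1}=y$ with durations $s_i\geq t_n\geq 2t$.

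I now replace the first step $x_n\to z_2$ (duration $s_1$) with the two-step sequence $x_n\to x_n\cdot t\to z_2$ of durations $t$ and $s_1-t$. Both durations are $\geq t$, the first step has error $0$, and the second has error $d(x_n\cdot s_1,z_2)\leq \varepsilon_n < \varepsilon/2$. Finally, swap the leading $x_n$ for $x^*$ to obtain the chain $x^*,\, x_n\cdot t,\, z_2,\ldots,z_{k+1}=y$ with the same duration sequence: its opening step has error $d(x^*\cdot t, x_n\cdot t) < \varepsilon/2$ by the choice of $\delta$, and all remaining steps are unchanged with errors $<\varepsilon/2$. This is an $(\varepsilon,t)$-chain from $x^*$ to $y$; since $\varepsilon,t>0$ were arbitrary, $y\in\Omega(x^*)\subseteq\Omega(M)$.
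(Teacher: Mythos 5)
Your proposal is correct and follows essentially the same route as the paper: extract $x_n \to x^*$ by compactness of $M$, then for given $\varepsilon, t$ split the first link of a sufficiently fine $(1/n,n)$-chain into a segment of duration exactly $t$ plus a remainder of duration $s_1 - t \geq t$, and replace the initial point $x_n$ by $x^*$ using continuity of the time-$t$ map. The only cosmetic difference is your use of $\varepsilon/2$ where the paper uses $\varepsilon$.
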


\begin{proof}
Firstly we recall the equality $\Omega(M)=\cup_{x \in M} \Omega(x)$. From the equality, we can easily prove the inclusion $\Omega(M) \subseteq \bigcap_{ \ \varepsilon,t>0}P_t(M,\varepsilon)$.

Conversely, we let $y \in \bigcap_{ \ \varepsilon,t>0}P_t(M,\varepsilon)$. Then, for each positive integer $n$, since $y$ is an element of $P_n(M,\frac{1}{n})$, there exist an element $x_n$ of $M$ and a $(\frac{1}{n}, n)$-chain from $x_n$ to $y$
$$
\{x_n=x_1^n, x_2^n, \cdots, x_{m_n}^n, x_{m_n+1}^n=y ; t_1^n, t_2^n, \cdots, t_{m_n}^n\}.
$$
Since $M$ is compact, the sequence $\{x_n\}$ in $M$ has a convergent subsequence.
Without loss of generality, we can assume that the original sequence $\{x_n\}$ converges to some point $x$ in $M$. For any $\varepsilon>0$ and $t>0$, there exists a positive real number $\delta$ such that
if $d(x,z)< \delta$, then $d(x\cdot t, z\cdot t)< \varepsilon$.
We can take a positive integer $n$ such that
$$
d(x, x_n)<\delta, n>2t \ {\rm{and}} \  \frac{1}{n}<\varepsilon.
$$
Since $d(x, x_n)<\delta$, we have $d(x\cdot t, x_n \cdot t)<\varepsilon.$ Thus the following sequence
$$
\{x, x_n\cdot t, x_2^n, \cdots, x_{m_n}^n, x_{m_n+1}^n=y; t, t_1^n-t, t_2^n, \cdots, t_{m_n}^n\}
$$
is $(\varepsilon, t)$-chain from $x$ to $y$.
Hence $y$ is an element of $\Omega(M)$, which completes the proof.
\end{proof}

\smallskip

In \cite[p.2724]{Di08}, Ding showed that the set $P_t(M,\varepsilon)$ is open and proved the inclusion $\overline{P_t(M,\varepsilon)\cdot [t,\infty)} \subseteq P_t(M,\varepsilon)$. Now put $A_{\varepsilon,t}:=\overline{P_t(M,\varepsilon)\cdot [t,\infty)}$. Then he got that the set $P_t(M,\varepsilon)$ is an open neighborhood of $A_{\varepsilon,t}$ and furthermore, $A_{\varepsilon,t}$ is a positively invariant closed attracting set.

\begin{lem}\label{4.7}
Let $P_t(M,\varepsilon)$ and $A_{\varepsilon,t}$ be the same as notations in the above statements. Then we have $$\bigcap_{\varepsilon,t>0}P_t(M,\varepsilon)=\bigcap_{\varepsilon,t>0}A_{\varepsilon,t}.$$
\end{lem}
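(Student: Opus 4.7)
The plan is to establish the two inclusions separately, with the $\supseteq$ direction being essentially immediate from Ding's result cited just above the lemma, and the $\subseteq$ direction requiring one observation about how an $(\varepsilon',t)$-chain ending at $y$ can be truncated.

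For the easy direction, Ding's inclusion $\overline{P_t(M,\varepsilon)\cdot [t,\infty)} \subseteq P_t(M,\varepsilon)$ says exactly that $A_{\varepsilon,t} \subseteq P_t(M,\varepsilon)$ for every $\varepsilon,t>0$. Intersecting over all such $\varepsilon,t$ gives $\bigcap_{\varepsilon,t>0}A_{\varepsilon,t}\subseteq \bigcap_{\varepsilon,t>0}P_t(M,\varepsilon)$.

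For the reverse inclusion, I would fix $y\in\bigcap_{\varepsilon,t>0}P_t(M,\varepsilon)$ together with arbitrary $\varepsilon,t>0$, and show $y\in A_{\varepsilon,t}=\overline{P_t(M,\varepsilon)\cdot[t,\infty)}$. The key observation is the monotonicity $P_t(M,\varepsilon')\subseteq P_t(M,\varepsilon)$ whenever $\varepsilon'\le\varepsilon$, because any $(\varepsilon',t)$-chain is automatically an $(\varepsilon,t)$-chain. So for each $\varepsilon'\in(0,\varepsilon]$, the hypothesis gives an $(\varepsilon',t)$-chain
\[
x=x_1,x_2,\ldots,x_n,x_{n+1}=y;\quad t_1,\ldots,t_n
\]
with some $x\in M$, times $t_i\ge t$, and $d(x_i\cdot t_i,x_{i+1})\le\varepsilon'$. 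Truncating the chain at $x_n$ shows $x_n\in P_t(M,\varepsilon')\subseteq P_t(M,\varepsilon)$, and then $x_n\cdot t_n\in P_t(M,\varepsilon)\cdot [t,\infty)$ satisfies $d(x_n\cdot t_n,y)\le\varepsilon'$. Letting $\varepsilon'\to 0$ produces a sequence in $P_t(M,\varepsilon)\cdot[t,\infty)$ converging to $y$, so $y\in A_{\varepsilon,t}$ as desired.

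I do not anticipate a genuine obstacle. The only step that requires any thought is the truncation argument in the hard direction, but once one notices that the penultimate vertex $x_n$ of a chain ending at $y$ lies in $P_t(M,\varepsilon')$ and that $t_n\geq t$, the closure membership follows simply by shrinking $\varepsilon'$. No compactness of $M$ or local compactness of $X$ is needed for this equality, since the nontrivial input $\overline{P_t(M,\varepsilon)\cdot [t,\infty)}\subseteq P_t(M,\varepsilon)$ has already been quoted from Ding.
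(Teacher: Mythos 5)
Your strategy is sound, and in its main thrust it is actually more direct than the paper's. The paper handles the easy containment $\bigcap_{\varepsilon,t>0}A_{\varepsilon,t}\subseteq\bigcap_{\varepsilon,t>0}P_t(M,\varepsilon)$ exactly as you do, via Ding's inclusion; but for the converse it takes a chain with time parameter $t'>2t$, replaces the terminal vertex $y$ by $x_m\cdot(t_m-t)$ to land in $P_t(M,\varepsilon)$, and then invokes continuity of the backward flow $z\mapsto z\cdot(-t)$ to deduce $y\cdot(-t)\in\overline{P_t(M,\varepsilon)}$ and finally $y=(y\cdot(-t))\cdot t\in\overline{P_t(M,\varepsilon)\cdot t}\subseteq A_{\varepsilon,t}$. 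Your truncation argument exhibits points of $P_t(M,\varepsilon)\cdot[t,\infty)$ within $\varepsilon'$ of $y$ directly, dispensing with the backward flow and the $\delta$--$\delta'$ continuity step entirely; that is a genuine simplification. Your closing remarks (monotonicity in $\varepsilon$, no compactness needed for this particular lemma) are also correct.

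There is, however, one step that fails as written: the claim that truncating the chain at its penultimate vertex gives $x_n\in P_t(M,\varepsilon')$. This is valid only when $n\ge 2$. If the chain witnessing $y\in P_t(M,\varepsilon')$ has a single step ($n=1$), the truncation leaves only the initial point $x_1\in M$, and under the paper's definition an $(\varepsilon,t)$-chain has at least one step, so $M\not\subseteq P_t(M,\varepsilon)$ in general (e.g.\ for the flow $x\cdot s=xe^{-s}$ on $\mathbb{R}$ with $M=\{1\}$, the point $1$ need not lie in $P_t(M,\varepsilon)$ for large $t$). Hence you cannot conclude $x_1\in P_t(M,\varepsilon')$, and therefore cannot place $x_1\cdot t_1$ in $P_t(M,\varepsilon)\cdot[t,\infty)$; nothing in the hypothesis rules out that all available chains to $y$ have length one. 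The repair is precisely the device the paper uses, namely requesting chains with time parameter $t'>2t$: take the chain from $y\in P_{t'}(M,\varepsilon')$ instead. Then in the length-one case $t_1\ge t'>2t$, so $x_1\cdot(t_1-t)\in P_t(M,\varepsilon)$ via the one-step chain $\{x_1,\,x_1\cdot(t_1-t);\,t_1-t\}$, and $x_1\cdot t_1=(x_1\cdot(t_1-t))\cdot t$ lies in $P_t(M,\varepsilon)\cdot[t,\infty)$ and within $\varepsilon'$ of $y$; the case $n\ge 2$ goes through exactly as you wrote it. With that patch your proof is complete.
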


\begin{proof} From the Ding's results, it is obvious that the set $\bigcap_{\varepsilon,t>0}A_{\varepsilon,t}$ is contained in the intersection $\bigcap_{\varepsilon,t>0}P_t(M,\varepsilon)$.

To show the converse, firstly, let $y$ be an element of $\bigcap_{\varepsilon,t>0}P_t(M,\varepsilon)$.
For an arbitrary positive real numbers $\varepsilon$ and $t$, we can choose a positive real number $t^{'}$ larger than $2t$. We note that the action $y \cdot (-s)(s>0)$ is continuous.
Let $\delta$ be an arbitrary positive real number. Using the continuity of the action, we can choose a positive real number $\delta^{'}$ such that if $d(y,y^{'})< \delta^{'}$ then
\begin{eqnarray}\label{*}
d(y\cdot (-t),y^{'}\cdot (-t))< \delta.
\end{eqnarray}

Put $\varepsilon^{'}:=min(\delta^{'}, \varepsilon)$. Since $y$ is an element of $P_{t^{'}}(M,\varepsilon^{'})$, there exists $(\varepsilon^{'},t^{'})$-chain from $x_{0}$ to $y$, say $\{x_{0}, x_1, \cdots, x_m, x_{m+1}=y; t_{0}, t_1, \cdots, t_m\}$, for some $x_{0}$ in $M$. Since $t^{'}$ is larger than $2t$, we can construct the new $(\varepsilon,t)$-chain from $x_{0}$ to $x_m \cdot (t_m-t)$ as follows,
$$
\{x_{0}, x_1, \cdots, x_m, x_{m}\cdot (t_m-t); t_{0}, t_1, \cdots, t_m-t\}.
$$
Thus we have that $x_m\cdot (t_m-t)$ is an element of $P_{t}(M,\varepsilon)$.
Note that the inequalities $d(y, x_m\cdot t_m)< \varepsilon^{'} \leq \delta^{'}$. By (\ref{*}), we obtain that $d(y\cdot (-t),x_m\cdot (t_m-t))< \delta$. Since $\delta$ is arbitrary, it yields that $y\cdot (-t) \in \overline{P_{t}(M,\varepsilon)}$. Then we gain the following inclusions
\begin{eqnarray*}
y=(y\cdot (-t)) \cdot t
&\in& \overline{P_{t}(M,\varepsilon)}\cdot t \\
&\subseteq& \overline{P_{t}(M,\varepsilon)\cdot t} \\
&\subseteq&  \overline{P_{t}(M,\varepsilon)\cdot [t,\infty)},
\end{eqnarray*}
for arbitrary  positive real numbers $\varepsilon,t$.
Therefore $y$ is an element of $\bigcap_{ \ \varepsilon,t>0}
A_{\varepsilon,t}$.
\end{proof}

\smallskip

To prove the theorem \ref{cpt}, we also need a basic property for $\Omega$-limit sets as follows.
\smallskip

\begin{lem}\cite{CKP13}\label{omega} If $p_{n} \to p, q_{n} \to q$ and $q_{n} \in \Omega(p_{n})$, then $q \in \Omega(p)$.
\end{lem}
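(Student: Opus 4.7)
The plan is to prove that $q \in \Omega(p)$ directly from the definition, by constructing for each $\varepsilon, t > 0$ an $(\varepsilon, t)$-chain from $p$ to $q$. The raw material will be a chain from $p_n$ to $q_n$ for a conveniently large $n$, into which I splice a short ``approach'' from $p$ at the front and a single short cap to $q$ at the end. Continuity of the flow is used only at the fixed time $t$, and nearness at the endpoints comes from the sequence convergences.

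First I fix $\varepsilon, t > 0$, and by continuity of the time-$t$ map at $p$ I choose $\delta > 0$ so that $d(x, p) < \delta$ forces $d(x \cdot t, p \cdot t) < \varepsilon/2$. Then I select $n$ large enough that $d(p_n, p) < \delta$ and $d(q_n, q) < \varepsilon/2$, and I take an $(\varepsilon/2, 2t)$-chain from $p_n$ to $q_n$, say $p_n = x_0, x_1, \ldots, x_k = q_n$ with flow times $t_0, \ldots, t_{k-1} \geq 2t$ and $d(x_i \cdot t_i, x_{i+1}) \leq \varepsilon/2$. The candidate chain from $p$ to $q$ is
$$
p,\ p_n \cdot t,\ x_1,\ x_2,\ \ldots,\ x_{k-1},\ q,
$$
with consecutive flow times $t,\ t_0 - t,\ t_1,\ \ldots,\ t_{k-2},\ t_{k-1}$. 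Each of these is $\geq t$, because $t_0 - t \geq t$ and each $t_i \geq 2t$. The first jump has length $d(p \cdot t, p_n \cdot t) < \varepsilon/2$ by the continuity choice; the second equals $d(p_n \cdot t_0, x_1) \leq \varepsilon/2$ from the original chain; the middle jumps are inherited; and the terminal jump satisfies $d(x_{k-1} \cdot t_{k-1}, q) \leq d(x_{k-1} \cdot t_{k-1}, q_n) + d(q_n, q) \leq \varepsilon$. Thus the spliced sequence is an $(\varepsilon, t)$-chain from $p$ to $q$, as required.

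The main delicacy is that the length $k$ and the intermediate times $t_0, \ldots, t_{k-1}$ both depend on $n$ and may be arbitrarily large, so one cannot invoke continuity of the flow at times like $t_0$ in a way that is uniform in $n$. The device that defuses this is to use continuity of the flow only at the fixed time $t$, which is legitimate via $p_n \cdot t \to p \cdot t$, and to engineer the new chain so that a single initial flow of duration $t$ carries $p$ near to $p_n \cdot t$; the leftover duration $t_0 - t$ of the original first step then bridges $p_n \cdot t$ to $x_1$. On the terminal side, no continuity of the flow is needed at all, since $q_n$ and $q$ are already close in the metric, so $q$ simply replaces $q_n$ as the final point of the chain without flowing any further.
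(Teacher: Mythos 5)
Your proof is correct: each spliced step has flow time at least $t$ (since $t_0 - t \ge t$ and $t_i \ge 2t$), and each jump is bounded by $\varepsilon$, including the terminal one via the triangle inequality with $d(q_n,q)<\varepsilon/2$. The paper itself gives no proof of this lemma (it only cites \cite{CKP13}), but your construction is essentially the same chain-splicing device the paper uses in its proof of Lemma \ref{ome} --- continuity of the fixed time-$t$ map to insert the initial step $p \mapsto p\cdot t$ near $p_n\cdot t$, with a $2t$ time bound on the auxiliary chain to keep $t_0 - t \ge t$ --- augmented by the endpoint cap needed because $q_n$ varies here.
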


\smallskip

In \cite{Di08}, Ding proved that the image of closed set under the chain prolongation $P$ is quasi-attracting. Especially, in the proof of the theorem 4.6 \cite[p.2724]{Di08}, he claimed that the image of closed set under the chain prolongation $P$ is also closed using the theorem 3.4 \cite[p.2721]{Di08}.
We now mention another result for quasi-attracting sets in the next theorem. In detail, we prove that the $\Omega$-limit set of a compact subset of $X$ becomes the intersection of attracting sets, that is, a quasi-attracting set.

\smallskip

\begin{thm}\label{cpt} For a compact subset $M$ of $X$, $\Omega(M)$ is quasi-attracting.
\end{thm}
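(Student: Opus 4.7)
The plan is to chain together Lemma \ref{ome} and Lemma \ref{4.7} with the Ding-type result on $A_{\varepsilon,t}$ that was recalled just before Lemma \ref{4.7}, so that $\Omega(M)$ is exhibited directly as an intersection of attracting sets. In more detail, I would start with the identity
$$\Omega(M) = \bigcap_{\varepsilon,t>0} P_t(M,\varepsilon)$$
from Lemma \ref{ome}, then substitute the equality from Lemma \ref{4.7} to obtain
$$\Omega(M) = \bigcap_{\varepsilon,t>0} A_{\varepsilon,t},$$
where $A_{\varepsilon,t} = \overline{P_t(M,\varepsilon)\cdot[t,\infty)}$.

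Next I would invoke the recalled fact (from Ding's paper) that each $A_{\varepsilon,t}$ is a positively invariant closed attracting set, admitting $P_t(M,\varepsilon)$ as an open neighborhood with $\omega\bigl(P_t(M,\varepsilon)\bigr) \subseteq A_{\varepsilon,t}$. Thus every term in the intersection is an attracting set in the sense of Section~3. Since $\Omega(M)$ is by the above a closed subset of $X$ (being an intersection of closed sets) that coincides with the intersection of the family $\{A_{\varepsilon,t}\}_{\varepsilon,t>0}$ of attracting sets, the definition of a quasi-attracting set given in Section~3 applies verbatim, and we conclude that $\Omega(M)$ is quasi-attracting.

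The step that deserves care, rather than a genuine obstacle, is verifying that the hypotheses of the previous lemmas are in force: one needs $M$ compact for Lemma \ref{ome} (to extract a convergent subsequence of the initial points of the chains), and one needs the continuity of the flow for Lemma \ref{4.7}. Both are available here, so the argument essentially assembles the already-proved pieces. No new technical work beyond a clean citation of Lemmas \ref{ome} and \ref{4.7} and the preceding summary of Ding's observations about $A_{\varepsilon,t}$ is required.
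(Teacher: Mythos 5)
Your proposal is correct and follows essentially the same route as the paper: combine Lemma \ref{ome} and Lemma \ref{4.7} to write $\Omega(M)=\bigcap_{\varepsilon,t>0}A_{\varepsilon,t}$ and then invoke Ding's observation that each $A_{\varepsilon,t}$ is an attracting set. The only cosmetic difference is that the paper establishes closedness of $\Omega(M)$ separately via Lemma \ref{omega}, whereas you obtain it for free as an intersection of closed sets, which is equally valid.
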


\smallskip

\begin{proof} First of all, we show that $\Omega(M)$ is a closed subset. Let $y$ be an element of $\overline{\Omega(M)}$. Then there exists a sequence $\{y_n\}$ in $\Omega(M)$ such that $\{y_n\}$ converges to $y$. Thus, for every $n$, there exists a point $x_n$ in $M$ such that $y_n \in \Omega(x_n)$. By the compactness of $M$, the sequence $\{x_n\}$ has a convergent subsequence. Without loss of generality, we can assume that the original sequence $\{x_n\}$ converges to some point $x$ in $M$.
By the lemma \ref{omega}, $y$ is an element of $\Omega(x)$ and thus, $\Omega(M)$ is closed.

By combining Lemma \ref{ome} and \ref{4.7}, we reach the following equalities
$$
\Omega(M)=\bigcap_{\varepsilon,t>0}P_t(M,\varepsilon)=\bigcap_{\varepsilon,t>0}A_{\varepsilon,t}.
$$
In conclusion, the $\Omega$-limit set becomes the intersection of the attracting sets, that is, quasi-attracting set. This completes the proof.
\end{proof}

\smallskip






\begin{thebibliography}{...}
\vskip .1in

\bibitem {Ak93} E. Akin,
\newblock{\it The general toology of dynamical systems, in: Graduate Studies in Mathematics},
\newblock vol. 1, A.M.S., Providence, RI, 1993.


\bibitem {Ao94} N. Aoki and K. Hiraide,
\newblock{\it Topological theory of dynamical systems, Recent advances. North-Holland Mathematical Library, 52},
\newblock North-Holland Publishing Co., Amsterdam, 1994.




\bibitem {BCP86}  J.S. Bae, S.K. Choi and J.-S. Park,  {\it  Limit sets and prolongations in topological dynamics},
J. Differential Equations {\bf 64 } (1986), 336-339.


\bibitem {BS70} N.P. Bhatia and G.P. Szeg$\ddot{o}$,
\newblock{\it Stability theory of dynamical systems},
\newblock Springer-Verlag, Berlin, 1970.




\bibitem {Bl92} L. Block and W. Coppel,
\newblock{\it Dynamics in one dimension, Lecture Notes in Math. 1513},
\newblock Springer-Verlag, Berlin, 1992.



\bibitem {Bo75} R. Bowen,
\newblock{\it Equilibrium States and the Ergodic Theory of Axiom A Diffeomorphisms,
Lecture Notes in Math. 470},
\newblock Springer-Verlag, New York, 1975.



\bibitem {Ch05} H.-Y. Chu,  {\it Chain recurrence for multi-valued dynamical systems on noncompact spaces},
Nonlinear Anal. {\bf 61 } (2005), 715-723.



\bibitem {Ch08} H.-Y. Chu,  {\it Strong centers of attraction for multi-valued dynamical systems on noncompact spaces},
Nonlinear Anal. {\bf 68 } (2008), 2479-2486.


\bibitem {CKP13} H.-Y. Chu, A. Kim and J.-S. Park,  {\it Some remarks on chain polongations in dynamical systems},
J. Chungcheng Math. {\bf 26 } (2013), 351-356.


\bibitem {CP05} H.-Y. Chu and J.-S. Park,  {\it Attractors for relations in $\sigma$-compact spaces},
Topology Appl. {\bf 148 } (2005), 201-212.


\bibitem {Co78} C.C. Conley,
\newblock{\it Isolated Invariant Sets and Morse Index},
\newblock Amer. Math. Sci., Providence, 1978.


\bibitem {Di08}  C. Ding,  {\it  Chain prolongation and chain stability},
Nonlinear Anal. {\bf 68 } (2008), 2719-2726.


\bibitem {Hu91}  M. Hurley,  {\it  Chain recurrence and attraction in noncompact spaces},
Ergodic Theory Dyn. Syst. {\bf 11 } (1991), 709-729.


\bibitem {Hu92}  M. Hurley,  {\it  Noncompact chain recurrence and attraction},
Proc. Am. Math. Soc. {\bf 115 } (1992), 1139-1148.


\bibitem {KP10}  S.-H. Ku and J.-S. Park,  {\it  Characterizations on chain recurrences},
Bull. Korean Math. Soc. {\bf 47 } (2010), 287-293.


\bibitem {Mc92}  R. McGehee,  {\it  Attractors for closed relations on compact Hausdorff space},
Indiana Univ. Math. J. {\bf 41 } (1992), 1165-1209.


\bibitem {Op05} P. Oprocha,  {\it Topological approach to chain recurrence in continuous dynamical systems}, Opuscula math.
 {\bf  25} (2005), no. 2, 261-268.



\bibitem {PKC07}  J.-S. Park, D. S. Kang and H.-Y. Chu,  {\it  Stabilities in multi-valued dynamical systems},
Nonlinear Anal. {\bf 67 } (2007), 2050-2059.




\bibitem {Pi99} S.Y. Pilyugin,
\newblock{\it Shadowing in dynamical systems, Lecture Notes in Math. 1706},
\newblock Springer-Verlag, Berlin, 1999.




\bibitem {Si75} K.S. Sibirsky,
\newblock{\it Introduction to Topological Dynamics},
\newblock Noordhoff International Publishing, Leyden, 1975.



\bibitem {Vr93} J. de Vries,
\newblock{\it Elements of topological dynamics},
\newblock Kluwer Academic Publisher, Dordrecht, 1993.


\end{thebibliography}
\end{document}